\def\Dj{\hbox{D\kern-.73em\raise.30ex\hbox{-}
\raise-.30ex\hbox{}}}
\def\dj{\hbox{d\kern-.33em\raise.80ex\hbox{-}
\raise-.80ex\hbox{\kern-.40em}}}
\newtheorem{theorem}{Theorem}
\newtheorem{theo}{Theorem}
\newtheorem{lemma}[theo]{Lemma}
\newtheorem{corollary}[theo]{Corollary}
\newtheorem{example}[theo]{Example}
\newtheorem{remark}[theo]{Remark}
\newenvironment{hproof}{%
  \proof}{\endproof}
\begin{document}

\baselineskip=0.30in

\vspace*{25mm}

\begin{center}
{\LARGE \bf An Efficient Algorithm for Latin Squares in a Bipartite Min-Max-Plus System}

\vspace{7mm}

{\large \bf Mubasher Umer}$^{1}$, {\large \bf Umar Hayat}$^1$, {\large \bf Fazal Abbas}$^{2}$\footnote{Corresponding author: fabbas1@stetson.edu }, {\large \bf Anurag Agarwal}$^{3}$, {\large \bf Petko Kitanov}$^{4}$

\vspace{6mm}

\baselineskip=0.18in

$^1${\it \small{Department of Mathematics, Quaid-i-Azam University, Islamabad, Pakistan}}\\
$^2${\it \small{Department of Mathematics and Computer Sciences, Stetson University, DeLand, FL, USA}}\\
$^3${\it \small{School of Mathematical Sciences, Rochester Institute of Technology, Rochester, NY, USA}}
$^4${\it \small{Wells College, Aurora, NY, USA}}
\vspace{6mm}

\end{center}

\begin{abstract}

 In this paper, we consider the eigenproblems for Latin squares in a bipartite min-max-plus system. The focus is upon developing a new algorithm to compute the eigenvalue and eigenvectors (trivial and non-trivial) for Latin squares in a bipartite min-max-plus system. We illustrate the algorithm using some examples. Furthermore, we compare the  results of our algorithm with some of the existing algorithms which shows that the propose method is more efficient.

\end{abstract}

\vspace{3mm}

\noindent
{\it Key Words\/}: Bipartite min-max-plus systems; Eigenvalue and eigenvectors; Latin squares

\vspace{3mm}

\noindent
{\it AMS Classification\/}: 15A18; 05B20; 08A99

\vspace{10mm}

\baselineskip=0.29in

\section{Introduction}

Time evolution of discrete event dynamic systems can be described through equations composed using three operations the maximum, the minimum and the addition. Such systems are described as min-max-plus systems. The bipartite min-max-plus systems are determined by the union of two sets of equations: one set of equations containing the maximization and the addition; and another set of equations containing the minimization and the addition.

Max-plus algebra has many applications in mathematics as well as in different areas such as mathematical physics, optimization, combinatorics, and algebraic geometry \cite{12}. Max-plus algebra is used in machine scheduling, telecommunication networks, control theory, manufacturing systems, parallel processing systems, and traffic control \cite{2,11,16}. Max-plus algebra is also used in image steganography \cite{7b}. In \cite{14}, the author has described the whole Dutch railway system by a max-plus system.

The eigenproblems for a matrix $A$ of order $n \times n$ are the problems to find out an eigenvalue $\lambda$ and the eigenvector $v$, such that $Av= \lambda v$. In this article, we consider eigenproblems for the bipartite min-max-plus systems. A power algorithm has been presented to calculate the eigenvalue and eigenvectors for such systems \cite{15}. M. Umer et al. \cite{17} develop an efficient algorithm to determine the eigenvalue and eigenvectors. In \cite{9}, the authors have demonstrated that the presence of eigenvectors (non-trivial) depends upon the position of the greatest and the least elements in the Latin squares in a bipartite min-max-plus systems. For further study of eigenproblems for discrete event systems, see \cite{5a,7a,8a,9,15}.

An algorithm is developed in this paper, that can calculate the eigenvectors corresponding to an eigenvalue $\lambda$. In particular, we apply this algorithm to find out the eigenvectors (trivial and non-trivial) for Latin squares in a bipartite min-max-plus systems.

The structure of the paper is along these lines. First of all, some preliminary notions and bipartite min-max-plus systems are presented in section $2$, then a robust algorithm is developed for determining the eigenvalue and eigenvectors of such systems. In section $3$, we present these systems for the Latin squares and calculate trivial and non-trivial eigenvectors for such models. The paper is concluded by presenting some conclusion and remarks in section $4$.

\vspace{10mm}

\section{Bipartite min-max-plus systems}

First of all define $\mathbb{R}_{\epsilon}=\mathbb{R}\cup \lbrace \epsilon \rbrace$, $\mathbb{R}_{\tau}=\mathbb{R}\cup\lbrace \tau \rbrace$, where $\epsilon=-\infty$, $\tau=+\infty$, and $\mathbb{R}$ is the set of real numbers. A matrix or a vector containing all components equal to $-\infty$ is represented by $\bm{\epsilon}$, whereas a matrix or a vector containing all components equal to $+\infty$ is represented by $\bm{\tau}$. $\underline{n}$ denotes the set of first $n$ positive integers. The following scalar operations are introduced as;

\begin{eqnarray*}
r\oplus s &=& max\lbrace r,s\rbrace, \text{ \ \ for each } r, s \in\mathbb{R}_{\epsilon} \\
r\oplus^{'} s &=& min\lbrace r,s\rbrace, \text{ \ \ for each } r, s \in\mathbb{R}_{\tau} \\
r\otimes s &=& r+s, \text{ \ \ \ \ \ \ \ \ for each } r, s \in\mathbb{R}.
\end{eqnarray*}

The algebraic structure $\mathbb{R}_{min}=(\mathbb{R}_{\tau},\oplus^{'},\otimes)$ represents min-plus algebra and $\mathbb{R}_{max}=(\mathbb{R}_{\epsilon},\oplus,\otimes)$ represents max-plus algebra. Since in both $\mathbb{R}_{min}$ and $\mathbb{R}_{max}$ the multiplication operator is defined by addition, therefore in both systems the notation for multiplication operator is the same.

The collection of all matrices of order $m\times n$ in min-plus and max-plus algebra is represented as $\mathbb{R}_{min}^{m\times n}$ and $\mathbb{R}_{max}^{m\times n}$ respectively. While
$\mathbb{R}_{min}^{m}$ and $\mathbb{R}_{max}^{m}$ denotes the set of all vectors in min-plus and max-plus algebra respectively. The scalar operations to matrices are extended as follows.

Suppose that $A=[a_{ij}]$, $B=[b_{ij}]$, $U=[u_{ij}]$, $V=[v_{ij}]$ such that $A, B \in \mathbb{R}_{max}^{m\times n}$; $U, V \in \mathbb{R}_{min}^{m\times n}$ and $\alpha \in \mathbb{R}$ then

\begin{eqnarray*}
A \oplus B &=& [c_{ij}], \text{ \ \ \ where \ } c_{ij} = max\lbrace a_{ij},b_{ij} \rbrace, \\
U \oplus^{'} V &=& [w_{ij}], \text{ \ \ where \ } w_{ij} = min\lbrace u_{ij},v_{ij} \rbrace, \\
\alpha \otimes A &=& \alpha \otimes [a_{ij}] =\alpha + [a_{ij}],
\end{eqnarray*}

\begin{center}
for $ i\in \underline{m} ,\: j\in \underline{n} $.
\end{center}

If $A \in \mathbb{R}_{max}^{m\times r}$, $B \in \mathbb{R}_{max}^{r\times n}$, $U \in \mathbb{R}_{min}^{m\times r}$ and $V \in \mathbb{R}_{min}^{r\times n}$, then

\begin{eqnarray*}
A \otimes B &=& [c_{ij}],  \text{ \ \ \ where \ } c_{ij} = \bigoplus_{k=1}^{r}(a_{ik}\otimes b_{kj}) = max\lbrace a_{ik} + b_{kj} \rbrace, \\
U \otimes V &=& [w_{ij}],  \text{ \ \ where \ } w_{ij} = {\bigoplus_{k=1}^{r}}^{'}(u_{ik}\otimes v_{kj}) = min\lbrace u_{ik} + v_{kj} \rbrace,
\end{eqnarray*}

\begin{center}
for $ \: k \in \underline{r}, i\in \underline{m} ,\: j\in \underline{n} $.
\end{center}

One can see that the notation of the multiplication operator in both systems is different. The addition is defined as maximum (minimum) in $\mathbb{R}_{max}$ (respectively, $\mathbb{R}_{min}$).
A bipartite min-max-plus system can be represented as;

\begin{eqnarray*}
u_{i}(l+1) &=& max \lbrace a_{i1}+w_{1}(l), . . . , a_{in}+w_{n}(l) \rbrace \\
w_{j}(l+1) &=& min \lbrace b_{j1}+u_{1}(l), . . . , b_{jm}+u_{m}(l) \rbrace,
\end{eqnarray*}

where $ u_{i}(l), a_{ij}\in \mathbb{R}_{\epsilon}$; $w_{j}(l), b_{ji}\in \mathbb{R}_{\tau}$; $l\in \mathbb{W}$ for all $ i\in \underline{m}$; $ j\in \underline{n}$. These equations can be written as;

\begin{eqnarray*}
u_{i}(l+1) &=& \bigoplus_{j=1}^{n}(a_{ij}\otimes w_{j}(l)), \\
w_{j}(l+1) &=& {\bigoplus_{i=1}^{m}}^{'}(b_{ji}\otimes u_{i}(l)).
\end{eqnarray*}

The above equations can be denoted as;

\begin{eqnarray}
    \left.\begin{array}{ll}
        u(l+1)=A \otimes w(l), \, \text{ \ \ }for\:  l\in\mathbb{W}\\
 w(l+1)=B \otimes^{'} u(l)  , \,  \text{ \ }for\:  l\in \mathbb{W}
        \end{array}\right\},
  \end{eqnarray}

where

\begin{center}
\[
u(l)=\begin{pmatrix} u_{1}(l)\\u_{2}(l)\\
\cdot\\
\cdot\\
\cdot\\
u_{m}(l) \end{pmatrix} \in \mathbb{R}_{\epsilon}^{m},
\text{ \ \ \ }
w(l)=\begin{pmatrix} w_{1}(l)\\w_{2}(l)\\
\cdot\\
\cdot\\
\cdot\\
w_{n}(l) \end{pmatrix} \in \mathbb{R}_{\tau}^{n}
\]
\end{center}

  \begin{center}
  $A=\begin{pmatrix}
    a_{11} & a_{12} & a_{13} & \dots  & a_{1n} \\
    a_{21} & a_{22} & a_{23} & \dots  & a_{2n} \\
    \vdots & \vdots & \vdots & \ddots & \vdots \\
    a_{m1} & a_{m2} & a_{m3} & \dots  & a_{mn}
\end{pmatrix} \in \mathbb{R}_{\epsilon}^{m\times n}$,
  \end{center}

  \begin{center}
 $B=\begin{pmatrix}
    b_{11} & b_{12} & b_{13} & \dots  & b_{1m} \\
    b_{21} & b_{22} & b_{23} & \dots  & b_{2m} \\
    \vdots & \vdots & \vdots & \ddots & \vdots \\
    b_{n1} & b_{n2} & b_{n3} & \dots  & b_{nm}
\end{pmatrix} \in \mathbb{R}_{\tau}^{n\times m}$.
  \end{center}

Compactly, the above system can be written by a mapping $\mathcal{M}(\cdot)$, such that

\begin{equation} \label{Eq 2}
x(l+1)=\mathcal{M} (x(l)),
\end{equation}

\begin{center}
where $x(l)=\begin{pmatrix} u(l)\\w(l)\end{pmatrix}, \mathcal{M} \begin{pmatrix} \begin{pmatrix} u(l)\\w(l)\end{pmatrix} \end{pmatrix} = \begin{pmatrix} A \otimes w(l) \\B \otimes^{'}u(l)
\end{pmatrix} ,$ for  $l\in \mathbb{W}.
$
\end{center}

For a system of type $(\ref{Eq 2})$, the notion of eigenvalue and eigenvectors is defined as follows. A real number $\lambda \in \mathbb{R}$ is said to be an eigenvalue corresponding to an eigenvector $v \in \mathbb{R}^{m+n}$ if

\begin{equation}
\mathcal{M}(v)=\lambda \otimes v.
\end{equation}

Define $A_{\lambda} = -\lambda \otimes A$ and $B_{\lambda} = -\lambda \otimes B$, corresponding to the eigenvalue $\lambda$ of a system of type $(\ref{Eq 2})$. Also define

\begin{equation}
\left.\begin{array}{ll}
u^{*}(l+1)=A_{\lambda} \otimes w^{*}(l)\\
w^{*}(l+1)=B_{\lambda} \otimes^{'} u^{*}(l)
\end{array}\right\}
\end{equation}

for $l \in\mathbb{W}$, where $u^{*}(l) \in \mathbb{R}_{\epsilon}^{m}$ and $w^{*}(l) \in \mathbb{R}_{\tau}^{n}$.

The above system can be denoted as,

\begin{equation} \label{Eq 5}
x^{*}(l+1)=\mathcal{N} (x^{*}(l))
\end{equation}

\begin{center}
where $x^{*}(l)=\begin{pmatrix} u^{*}(l)\\w^{*}(l)\end{pmatrix}, \mathcal{N} \begin{pmatrix} \begin{pmatrix} u^{*}(l)\\w^{*}(l)\end{pmatrix} \end{pmatrix} = \begin{pmatrix} A_{\lambda} \otimes w^{*}(l) \\B_{\lambda} \otimes^{'}u^{*}(l)
\end{pmatrix} ,$
\end{center}

for $l\in \mathbb{W}$. Let $v$ be a vector, then $\mathcal{N}^{l} (v)$ represents a vector obtained after applying $\mathcal{N}$ on the vector $v$ by $l$ times. A relation between $(\ref{Eq 2})$ and $(\ref{Eq 5})$ is shown in the following theorem.

\begin{theorem} \label{Th 1}
Let $\lambda$ be an eigenvalue for a system of type $(\ref{Eq 2})$ and \\$v = \begin{pmatrix} u(l)\\w(l)\end{pmatrix}$ be a vector, then
\begin{equation*}
\mathcal{M}(v) = \lambda \otimes \mathcal{N} (v).
\end{equation*}
\end{theorem}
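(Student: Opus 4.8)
The plan is to unwind both sides of the claimed identity block by block and reduce everything to the single algebraic fact that pre-multiplying a max-plus (respectively, min-plus) matrix by a real scalar commutes with matrix–vector multiplication. Writing $v=\begin{pmatrix} u\\ w\end{pmatrix}$ with $u\in\mathbb{R}_{\epsilon}^{m}$ and $w\in\mathbb{R}_{\tau}^{n}$, the definitions give $\mathcal{M}(v)=\begin{pmatrix} A\otimes w\\ B\otimes^{'}u\end{pmatrix}$ and $\mathcal{N}(v)=\begin{pmatrix} A_{\lambda}\otimes w\\ B_{\lambda}\otimes^{'}u\end{pmatrix}$ with $A_{\lambda}=-\lambda\otimes A$ and $B_{\lambda}=-\lambda\otimes B$. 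Hence it is enough to establish the two scalar/matrix identities
\begin{equation*}
(-\lambda\otimes A)\otimes w=-\lambda\otimes(A\otimes w),\qquad (-\lambda\otimes B)\otimes^{'}u=-\lambda\otimes(B\otimes^{'}u),
\end{equation*}
and then to cancel using $\lambda\otimes(-\lambda\otimes z)=z$ on each block.

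First I would check the max-plus identity entrywise: the $i$-th component of $(-\lambda\otimes A)\otimes w$ equals $\max_{j}\{(-\lambda+a_{ij})+w_{j}\}=-\lambda+\max_{j}\{a_{ij}+w_{j}\}$, which is precisely the $i$-th component of $-\lambda\otimes(A\otimes w)$; pulling the constant $-\lambda$ out of the maximum is legitimate even if some $a_{ij}$ or $w_{j}$ equals $\epsilon=-\infty$, since $-\lambda+(-\infty)=-\infty$ for the real number $\lambda$. The min-plus identity is identical with $\max$ replaced by $\min$ and $\epsilon$ replaced by $\tau=+\infty$, using $-\lambda+(+\infty)=+\infty$. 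Then I would assemble the blocks:
\begin{equation*}
\lambda\otimes\mathcal{N}(v)=\begin{pmatrix} \lambda\otimes\big((-\lambda\otimes A)\otimes w\big)\\ \lambda\otimes\big((-\lambda\otimes B)\otimes^{'}u\big)\end{pmatrix}=\begin{pmatrix} \lambda\otimes\big(-\lambda\otimes(A\otimes w)\big)\\ \lambda\otimes\big(-\lambda\otimes(B\otimes^{'}u)\big)\end{pmatrix},
\end{equation*}
and since $\lambda+(-\lambda)=0$ acts as the identity for $\otimes$ componentwise (again stable under entries $\mp\infty$), the right-hand side collapses to $\begin{pmatrix} A\otimes w\\ B\otimes^{'}u\end{pmatrix}=\mathcal{M}(v)$.

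There is essentially no hard step here: the content is just that $\mathcal{M}$ and $\mathcal{N}$ differ by the global shift $-\lambda$, i.e.\ $\mathcal{N}=(-\lambda)\otimes\mathcal{M}$. The only point warranting a line of care is confirming that the scalar-pull-out and the cancellation remain valid on $\mathbb{R}_{\epsilon}$ and $\mathbb{R}_{\tau}$, where entries may be $-\infty$ or $+\infty$; this holds precisely because $\lambda$ is assumed to be a finite real number, so adding $\pm\lambda$ never changes an infinite entry. Note also that the hypothesis that $\lambda$ is an eigenvalue of the system $(\ref{Eq 2})$ is not used in the computation itself — it only motivates why the shifted map $\mathcal{N}$ is the relevant object to introduce.
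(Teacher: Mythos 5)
Your proof is correct and follows essentially the same route as the paper: both arguments amount to inserting the cancelling pair $\lambda\otimes(-\lambda)$ and distributing the scalar $-\lambda$ into each block so that $(-\lambda)\otimes A\otimes w$ and $(-\lambda)\otimes B\otimes^{'}u$ become $A_{\lambda}\otimes w$ and $B_{\lambda}\otimes^{'}u$. The only difference is that you verify the scalar-pull-out identity entrywise and check its stability on $\mathbb{R}_{\epsilon}$ and $\mathbb{R}_{\tau}$, whereas the paper uses it implicitly; your observation that the eigenvalue hypothesis plays no role in the computation is also accurate.
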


\begin{proof}
Since
\begin{eqnarray*}
\mathcal{M} (v) &=& \begin{pmatrix} A \otimes w(l) \\B \otimes^{'}u(l)
   \end{pmatrix} \\ \vspace{2mm}
 &=& \lambda \otimes (-\lambda) \otimes \begin{pmatrix} A \otimes w(l) \\B \otimes^{'}u(l)
   \end{pmatrix} \\
 &=& \lambda \otimes \begin{pmatrix} (-\lambda) \otimes A \otimes w(l) \\(-\lambda) \otimes B \otimes^{'}u(l)
   \end{pmatrix} \\
 &=& \lambda \otimes \begin{pmatrix} A_{\lambda} \otimes w(l) \\B_{\lambda} \otimes^{'}u(l)
   \end{pmatrix} \\
 &=& \lambda \otimes \mathcal{N} (v).
\end{eqnarray*}
\end{proof}

\vspace{10mm}

\begin{corollary}
Let $v$ be an eigenvector corresponding to the eigenvalue $\lambda$ of a system of type $(\ref{Eq 2})$. Then
\begin{equation*}
\mathcal{N} (v) = v.
\end{equation*}
\end{corollary}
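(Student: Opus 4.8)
The plan is to derive the corollary directly from Theorem \ref{Th 1} together with the definition of an eigenvector. First I would invoke the hypothesis: since $v$ is an eigenvector corresponding to the eigenvalue $\lambda$ of a system of type (\ref{Eq 2}), the defining relation $\mathcal{M}(v) = \lambda \otimes v$ holds. Second, I would apply Theorem \ref{Th 1} to the same vector $v$, which gives $\mathcal{M}(v) = \lambda \otimes \mathcal{N}(v)$. Equating the two expressions for $\mathcal{M}(v)$ yields $\lambda \otimes v = \lambda \otimes \mathcal{N}(v)$.

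The final step is to cancel the scalar $\lambda$ from both sides. Since $\lambda \in \mathbb{R}$ is a genuine real number (not $\epsilon$ or $\tau$), the operation $\lambda \otimes (\cdot)$ is just coordinatewise addition of $\lambda$, which is invertible with inverse $(-\lambda) \otimes (\cdot)$. Applying $(-\lambda) \otimes (\cdot)$ to both sides of $\lambda \otimes v = \lambda \otimes \mathcal{N}(v)$ gives $v = \mathcal{N}(v)$, as desired.

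There is essentially no obstacle here; the only point requiring a word of care is the cancellation of $\lambda$, which is legitimate precisely because $\lambda$ is finite, so that $\otimes$ acts as an ordinary (invertible) translation on each component. I would write the argument as a short chain of equalities: $v = (-\lambda)\otimes\lambda\otimes v = (-\lambda)\otimes\mathcal{M}(v) = (-\lambda)\otimes\lambda\otimes\mathcal{N}(v) = \mathcal{N}(v)$, citing the eigenvector definition for the second equality and Theorem \ref{Th 1} for the third.
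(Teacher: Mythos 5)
Your argument is correct and is essentially the paper's own proof: both combine the eigenvector relation $\mathcal{M}(v)=\lambda\otimes v$ with Theorem \ref{Th 1} and then cancel $\lambda$. Your only addition is the (welcome) explicit remark that cancellation is valid because $\lambda$ is a finite real, a step the paper leaves implicit.
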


\begin{proof}
By definition, $\mathcal{M} (v) = \lambda \otimes v$. Using theorem one, we get, $\lambda \otimes \mathcal{N} (v) = \lambda \otimes v$. Hence $\mathcal{N} (v) = v$.
\end{proof}

\vspace{10mm}

\begin{theorem}
Let $\lambda$ be an eigenvalue of a system of type $(\ref{Eq 2})$ and let
\begin{equation*}
x^{*}(l+1) = \mathcal{N} (x^{*}(l))
\end{equation*}
for $l \in \mathbb{W}$, where $x^{*}(0)$ is an initial state vector. If $x^{*}(r) = x^{*}(s)$ for some integers $r>s\geq 0$, then
\begin{equation*}
\mathcal{M} (v) = \lambda \otimes v,
\end{equation*}
where
\begin{equation*}
v = x^{*}(s) \oplus ...  \oplus x^{*}(r-1).
\end{equation*}
\end{theorem}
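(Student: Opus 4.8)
The plan is to exploit the homogeneity of the map $\mathcal{N}$ together with the structure of $\mathcal{M}$ as expressed in Theorem~\ref{Th 1}, and the fact that each coordinate of $\mathcal{N}$ is built only from $\otimes$, $\oplus$, and $\oplus'$, all of which commute with adding a scalar and are monotone. First I would record the two elementary facts I will need: (i) $\mathcal{N}$ is additively homogeneous, i.e. $\mathcal{N}(\mu \otimes x) = \mu \otimes \mathcal{N}(x)$ for every scalar $\mu$, which follows coordinatewise because $A_{\lambda}\otimes(\mu\otimes w) = \mu\otimes(A_{\lambda}\otimes w)$ and likewise for $B_{\lambda}\otimes'$; and (ii) each coordinate function of $\mathcal{N}$, being a max of sums in the $u$-block and a min of sums in the $w$-block, distributes over $\oplus$ in the following one-sided sense appropriate to the block --- this is the point that needs a little care.

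The core computation is to show $\mathcal{N}(v) = v$ for $v = x^{*}(s)\oplus\cdots\oplus x^{*}(r-1)$, after which the conclusion $\mathcal{M}(v)=\lambda\otimes v$ is immediate from Theorem~\ref{Th 1}. To get $\mathcal{N}(v)=v$ I would argue in two inequalities. For the $\ge$ direction: since $\mathcal{N}$ is monotone and $v \ge x^{*}(t)$ for each $t\in\{s,\dots,r-1\}$, we get $\mathcal{N}(v)\ge \mathcal{N}(x^{*}(t)) = x^{*}(t+1)$ for each such $t$; taking the $\oplus$ over $t = s,\dots,r-1$ gives $\mathcal{N}(v)\ge x^{*}(s+1)\oplus\cdots\oplus x^{*}(r)$, and because $x^{*}(r)=x^{*}(s)$ this right-hand side equals $x^{*}(s)\oplus x^{*}(s+1)\oplus\cdots\oplus x^{*}(r-1) = v$. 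For the $\le$ direction: in the $u$-block each coordinate of $\mathcal{N}$ is a max of affine (tropically linear) functions, which commutes with $\oplus$ exactly, so $u$-block of $\mathcal{N}(v)$ equals $\bigoplus_{t=s}^{r-1}(\text{$u$-block of }\mathcal{N}(x^{*}(t))) = \bigoplus_{t=s}^{r-1} u^{*}(t+1)$, which again telescopes to the $u$-block of $v$. The subtlety is the $w$-block, where each coordinate is a $\min$ of sums: a $\min$ of maxes is $\ge$, not $=$, the max of mins, so distributivity fails in that block and one only obtains $w$-block of $\mathcal{N}(v) \le \bigoplus_{t} w^{*}(t+1)$. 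Fortunately that is exactly the inequality we need for the $\le$ direction in the $w$-block, and the reverse inequality there is already supplied by monotonicity as above. Combining the block-by-block inequalities in both directions yields $\mathcal{N}(v)=v$.

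I anticipate the main obstacle to be precisely this asymmetry between the $\oplus$-linear $u$-block and the $\oplus'$-linear $w$-block: one must be careful to use exact distributivity only where it holds ($\max$ over $\max$, in the $u$-block) and to fall back on monotonicity for the one-sided estimates in the $w$-block, making sure the directions of the two inequalities line up so that their intersection is an equality. A cleaner packaging, which I would probably adopt in the write-up, is to prove the single chain
\begin{equation*}
v \;=\; \bigoplus_{t=s}^{r-1} x^{*}(t) \;=\; \bigoplus_{t=s}^{r-1} x^{*}(t+1) \;\le\; \mathcal{N}\!\left(\bigoplus_{t=s}^{r-1} x^{*}(t)\right) \;=\; \mathcal{N}(v),
\end{equation*}
where the second equality uses $x^{*}(r)=x^{*}(s)$ to reindex, the inequality uses monotonicity of $\mathcal{N}$ together with $x^{*}(t+1)=\mathcal{N}(x^{*}(t))\le\mathcal{N}(v)$; and then separately establish the reverse inequality $\mathcal{N}(v)\le v$ by the coordinatewise distributivity argument above (exact in the $u$-block, one-sided but in the favourable direction in the $w$-block). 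Finally, applying Theorem~\ref{Th 1} to the vector $v$ gives $\mathcal{M}(v)=\lambda\otimes\mathcal{N}(v)=\lambda\otimes v$, which is the claim. A minor point worth stating explicitly is that one should check $v$ has all finite entries (so that $\lambda\otimes v$ makes sense as an eigenvector in $\mathbb{R}^{m+n}$), which holds because each $x^{*}(t)$ lies in $\mathbb{R}_{\epsilon}^{m}\times\mathbb{R}_{\tau}^{n}$ and the periodicity forces the relevant coordinates to be real.
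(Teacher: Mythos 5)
You have correctly located the crux that the paper's own proof silently skips: the equality $\mathcal{N}\bigl(\bigoplus_{t}x^{*}(t)\bigr)=\bigoplus_{t}\mathcal{N}(x^{*}(t))$ is exact in the $u$-block (max-plus matrix multiplication distributes over $\oplus$) but not in the $w$-block. Unfortunately your resolution of that difficulty rests on an inequality written in the wrong direction. In the $w$-block one compares $\bigl(B_{\lambda}\otimes'\bigoplus_{t}u^{*}(t)\bigr)_{j}=\min_{i}\bigl(b_{ji}-\lambda+\max_{t}u^{*}_{i}(t)\bigr)$ with $\bigoplus_{t}\bigl(B_{\lambda}\otimes' u^{*}(t)\bigr)_{j}=\max_{t}\min_{i}\bigl(b_{ji}-\lambda+u^{*}_{i}(t)\bigr)$, and the minimax inequality gives $\min\max\geq\max\min$; hence the $w$-block of $\mathcal{N}(v)$ is $\geq$, not $\leq$, $\bigoplus_{t}w^{*}(t+1)$. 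That is the same direction that monotonicity already supplies, so you never obtain the upper bound $\mathcal{N}(v)\leq v$ in the $w$-block and the sandwich does not close. What your argument actually establishes is only $\mathcal{N}(v)\geq v$ (equivalently $\mathcal{M}(v)\geq\lambda\otimes v$), together with exact equality in the $u$-block --- which is the content of the paper's Theorem 4, not of this theorem.

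The gap is not repairable as stated, because the claim is false in general: if $\mathcal{N}(v)=v$ always held for the cyclic-sum vector $v$, then steps 5 and 6 of Algorithm 1 (the check whether $\mathcal{N}(v)=v$ and the restart when it fails) would be vacuous, whereas their whole purpose, inherited from the Subiono--van der Woude power algorithm, is precisely that the min/max asymmetry can make $v$ fail to be an eigenvector. For comparison, the paper's own proof commits the very error you set out to avoid: it asserts the distributivity of $\mathcal{N}$ over $\oplus$ across both blocks without justification. Your instinct to scrutinize that step was the right one; the correct outcome of that scrutiny is that the theorem must either acquire an additional hypothesis guaranteeing equality in the $\min$--$\max$ exchange, or be weakened to the one-sided conclusion $\mathcal{M}(v)\geq\lambda\otimes v$.
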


\begin{proof}
From theorem $(\ref{Th 1})$,

\begin{eqnarray*}
\mathcal{M} (v) &=& \lambda \otimes \mathcal{N} (v) \\
 &=& \lambda \otimes \left\{ \begin{pmatrix} A_{\lambda} \otimes w^{*}(s) \\B_{\lambda} \otimes^{'}u^{*}(s)
   \end{pmatrix} \oplus . . . \oplus \begin{pmatrix} A_{\lambda} \otimes w^{*}(r-1) \\B_{\lambda} \otimes^{'}u^{*}(r-1)
   \end{pmatrix}\right\} \\
 &=& \lambda \otimes \{ x^{*}(s+1) \oplus ...  \oplus x^{*}(r) \} \\
 &=& \lambda \otimes v.
\end{eqnarray*}
\end{proof}

Now we present an algorithm in the following, which gives the eigenvectors corresponding to the eigenvalue $\lambda$.

\vspace{10mm}

\begin{algorithm}
\caption{Eigenvectors for systems of type $(\ref{Eq 2})$} \label{Alg 1}
\begin{enumerate}

\item Define $A_{\lambda} = -\lambda \otimes A$, and $B_{\lambda} = -\lambda \otimes B$.

\item Take an initial state vector $x^{*}(0)$.

\item Iterate $x^{*}(l+1) = \mathcal{N} (x^{*}(l)) = \begin{pmatrix} A_{\lambda} \otimes w^{*}(l) \\B_{\lambda} \otimes^{'}u^{*}(l) \end{pmatrix}$, for $l \in \mathbb{W}$, until there are positive integers $r>s\geq 0$, such that $x^{*}(r) = x^{*}(s)$.

\item Compute the eigenvector
\begin{equation*}
v = x^{*}(s) \oplus ...  \oplus x^{*}(r-1).
\end{equation*}

\item If $\mathcal{N} (v) = v$ then $v$ is the correct eigenvector and algorithm stops. Else if $\mathcal{N} (v) \neq v$ then go to the following step.

\item Take $v$ as new starting vector and iterate $x^{*}(l+1) = \mathcal{N} (x^{*}(l))$, until for some $t \geq 0$, it holds that $x^{*}(t+1) = x^{*}(t)$. Finally $x^{*}(t)$ is an eigenvector.

\end{enumerate}
\end{algorithm}

\vspace{20mm}

\begin{theorem}
Let $\lambda$ be an eigenvalue and $v$ be a vector computed as in the above algorithm $(\ref{Alg 1})$ for a system of type $(\ref{Eq 2})$, then

\begin{equation*}
\mathcal{N}(v) \geq v.
\end{equation*}
\end{theorem}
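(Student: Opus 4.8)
The plan is to exploit the monotonicity of the operator $\mathcal{N}$ together with the cyclic relation $x^{*}(r) = x^{*}(s)$ that was used to build $v$.

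First I would record that $\mathcal{N}$ is order preserving: if $x \leq y$ componentwise (with $x,y$ ranging over $\mathbb{R}_{\epsilon}^{m}\times\mathbb{R}_{\tau}^{n}$, and the usual conventions for $\pm\infty$), then $\mathcal{N}(x)\leq\mathcal{N}(y)$. Indeed, the $i$-th entry of $A_{\lambda}\otimes w^{*}$ is $\max_{j}\bigl((A_{\lambda})_{ij}+w^{*}_{j}\bigr)$ and the $j$-th entry of $B_{\lambda}\otimes^{'}u^{*}$ is $\min_{i}\bigl((B_{\lambda})_{ji}+u^{*}_{i}\bigr)$; in either case, raising the argument componentwise can only raise the value. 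Note that the min-plus block $B_{\lambda}\otimes^{'}(\cdot)$ need not distribute over $\oplus=\max$, but monotonicity is all we use.

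Next, since $v = x^{*}(s)\oplus\dots\oplus x^{*}(r-1)$ is by definition the componentwise maximum of $x^{*}(s),\dots,x^{*}(r-1)$, we have $v \geq x^{*}(k)$ for every $k$ with $s\leq k\leq r-1$. Applying the monotonicity of $\mathcal{N}$ and the recursion $x^{*}(k+1)=\mathcal{N}(x^{*}(k))$ gives $\mathcal{N}(v)\geq\mathcal{N}(x^{*}(k))=x^{*}(k+1)$ for each such $k$. Taking the maximum of these inequalities over $k=s,\dots,r-1$ yields
\[
\mathcal{N}(v)\;\geq\;x^{*}(s+1)\oplus\dots\oplus x^{*}(r).
\]

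Finally I would invoke the hypothesis $x^{*}(r)=x^{*}(s)$ to replace the term $x^{*}(r)$ on the right-hand side by $x^{*}(s)$; the right-hand side then becomes $x^{*}(s)\oplus x^{*}(s+1)\oplus\dots\oplus x^{*}(r-1)=v$, so $\mathcal{N}(v)\geq v$, as claimed. The only slightly delicate point is the monotonicity step: one must apply it with respect to the correct componentwise order on $\mathbb{R}_{\epsilon}$ and $\mathbb{R}_{\tau}$ and resist conflating it with the distributivity that fails for the min-plus block. Once that is stated cleanly, the remainder is a one-line manipulation.
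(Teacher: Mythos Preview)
Your proof is correct and follows essentially the same route as the paper's: from $v\geq x^{*}(l)$ for each $l\in\{s,\dots,r-1\}$, apply monotonicity of $\mathcal{N}$ to get $\mathcal{N}(v)\geq x^{*}(l+1)$, then take the supremum and use $x^{*}(r)=x^{*}(s)$. If anything, you are more careful than the paper in that you explicitly isolate and justify the monotonicity of $\mathcal{N}$ (and correctly note that distributivity of the min-plus block over $\oplus$ is \emph{not} available), whereas the paper simply writes $\mathcal{N}(v)\geq\mathcal{N}(x^{*}(l))$ without comment.
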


\begin{proof}
From the given algorithm

\begin{eqnarray*}
v &=& x^{*}(s) \oplus ...  \oplus x^{*}(r-1) \\
 &\geq& x^{*}(l) \text{ \ \ \ for all $l$ } \in \{s,...,r-1\}.
\end{eqnarray*}

Hence

\begin{eqnarray*}
\mathcal{N}(v) &\geq& \mathcal{N}(x^{*}(l)) \text{ \ \ \ for all $l$ } \in \{s,...,r-1\} \\
 &=& x^{*}(l+1) \text{ \ \ \ for all $l$ } \in \{s,...,r-1\}.
\end{eqnarray*}

These inequalities imply that

\begin{equation*}
\mathcal{N}(v) \geq x^{*}(s+1) \oplus . . . \oplus x^{*}(r) = v.
\end{equation*}

\end{proof}

\begin{lemma} \label{Le 1}
Let $v$ be a vector computed as in the above algorithm $(\ref{Alg 1})$ for a bipartite min-max-plus system. Let $(\ref{Eq 5})$ be restarted with $x^{*}(0)=v$, then

\begin{equation*}
x^{*}(l+1) \geq x^{*}(l) \text{ \ for all \ } l = 0, 1, 2, . . .
\end{equation*}
\end{lemma}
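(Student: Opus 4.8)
The plan is to prove the claim by induction on $l$, using the monotonicity of the operator $\mathcal{N}$ together with the key inequality $\mathcal{N}(v) \geq v$ established in the preceding theorem. First I would record the base case: since the restarted iteration sets $x^{*}(0) = v$, we have $x^{*}(1) = \mathcal{N}(x^{*}(0)) = \mathcal{N}(v) \geq v = x^{*}(0)$, which is exactly the previous theorem. Next I would verify that $\mathcal{N}$ is order-preserving, i.e.\ if $y \geq z$ (componentwise) then $\mathcal{N}(y) \geq \mathcal{N}(z)$; this follows because each component of $\mathcal{N}$ is built from $A_{\lambda}\otimes(\cdot)$ and $B_{\lambda}\otimes^{'}(\cdot)$, and both $\max$-of-sums and $\min$-of-sums are monotone in each argument (adding a fixed constant and then taking $\max$ or $\min$ preserves the order).

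For the inductive step, I would assume $x^{*}(l+1) \geq x^{*}(l)$ for some $l \geq 0$ and apply $\mathcal{N}$ to both sides; monotonicity then gives $\mathcal{N}(x^{*}(l+1)) \geq \mathcal{N}(x^{*}(l))$, that is $x^{*}(l+2) \geq x^{*}(l+1)$, which closes the induction. Putting the base case and inductive step together yields $x^{*}(l+1) \geq x^{*}(l)$ for all $l = 0, 1, 2, \dots$, as claimed.

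The only subtle point — and the step I would treat most carefully — is the monotonicity of $\mathcal{N}$ when the vectors involved have entries in $\mathbb{R}_{\epsilon}$ (for the $u^{*}$-block) and $\mathbb{R}_{\tau}$ (for the $w^{*}$-block); one must check that the order $\geq$ is interpreted consistently on the extended reals and that $\max$, $\min$, and $+$ behave monotonically even in the presence of $\pm\infty$ entries. Once that bookkeeping is in place, the argument is a routine two-line induction, so I would keep the write-up short and lean on the previously proved theorem for the base case.
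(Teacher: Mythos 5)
Your proof is correct and is essentially the argument the paper gives: the paper's proof writes $x^{*}(l+1)=\mathcal{N}^{l+1}(v)=\mathcal{N}^{l}(\mathcal{N}(v))\geq\mathcal{N}^{l}(v)=x^{*}(l)$, which is exactly your induction unrolled, with the base case $\mathcal{N}(v)\geq v$ supplied by the preceding theorem. If anything, your write-up is slightly more careful, since you state and justify the order-preservation of $\mathcal{N}$ (including the bookkeeping for $\pm\infty$ entries), whereas the paper invokes it silently in the step $\mathcal{N}^{l}(\mathcal{N}(v))\geq\mathcal{N}^{l}(v)$.
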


\begin{proof}
Since $x^{*}(l) = \mathcal {N}^{l} (v)$, which implies that
\begin{eqnarray*}
x^{*}(l+1) &=& \mathcal {N}^{l+1} (v) \\
 &=& \mathcal {N}^{l} ( \mathcal {N} (v) ) \\
 &\geq& \mathcal {N}^{l} (v) \\
 &=& x^{*}(l).
\end{eqnarray*}
\end{proof}

\begin{hproof}
Since the system ends up in a periodic behavior after a number of iterations, therefore step $3$ of the algorithm performs. If $\mathcal{N} (v) = v$ then $v$ is the correct eigenvector and clearly algorithm can stop. If $\mathcal{N} (v) \neq v$ then start by considering $y^{*}(0)$ as new initial vector and iterate $(\ref{Eq 5})$.

By first assumption, there exist integers $r>s\geq 0$, with $x^{*}(r) = x^{*}(s)$. By Lemma $(\ref{Le 1})$, $x^{*}(l+1) \geq x^{*}(l)$ for all $l \in \{s,...,r-1\}$. Our goal is to show that $x^{*}(l+1) = x^{*}(l)$ for all $l \in \{s,...,r-1\}$. Suppose on contrary that $x^{*}_{j^{\prime}}(l^{\prime}+1) > x^{*}_{j^{\prime}}(l^{\prime})$, for some integers $l^{\prime}, j^{\prime}$ such that, $s \leq l^{\prime} \leq r-1$ and  $1 \leq j^{\prime} \leq n$. We obtain that $x^{*}_{j^{\prime}}(r) > x^{*}_{j^{\prime}}(s)$. Because $x^{*}(r) = x^{*}(s)$, therefore $0 > 0$, a contradiction. Hence $x^{*}(l+1) = x^{*}(l)$ for all $l \in \{s,...,r-1\}$. Which completes the proof.
\end{hproof}

\section{Latin Squares in Bipartite (Min, Max, Plus)-systems}

A Latin square is a square matrix of order $n$, with elements from $n$ independent variables over $\mathbb{R}^{+}$ in such a way that each row and each column is a different permutation of the $n$ variables \cite{2a}. In the following, an example of a Latin square of order $4$ is given

\begin{center}
$L = \left(
  \begin{array}{cccc}
    3 & 2 & 4 & 1 \\
    4 & 1 & 3 & 2 \\
    2 & 4 & 1 & 3 \\
    1 & 3 & 2 & 4 \\
  \end{array}
\right).$
\end{center}
We consider the Latin squares of size $n$ in a system of type $(\ref{Eq 2})$. We have four possibilities for the Latin squares in a system of type $(\ref{Eq 2})$: $(1)$ The entries of both matrices $A$ and $B$ are $\underline{n}$; $(2)$ The entries of matrix $A$ are $\underline{n}_{\epsilon}$ and the entries of matrix $B$ are $\underline{n}$; $(3)$ The entries of matrix $A$ are $\underline{n}$ and the entries of matrix $B$ are $\underline{n}_{\tau}$; $(4)$ The entries matrix $A$ are $\underline{n}_{\epsilon}$ and the entries of matrix $B$ are $\underline{n}_{\tau}$.

In this section, we consider Latin squares for systems of type $(\ref{Eq 2})$ and algorithm $(\ref{Alg 1})$ is extended to calculate the eigenvalue and eigenvectors for such type of systems. In \cite{9}, authors show that for Latin squares in a system of type $(\ref{Eq 2})$, the eigenvalue $\lambda$ is determined as

\begin{equation} \label{Eq 3}
\lambda = \dfrac{max(A) + min(B)}{2}.
\end{equation}
Therefore eigenproblems for Latin squares in a system of type $(\ref{Eq 2})$ are more easy to answer. Now, we extend the algorithm $(\ref{Alg 1})$ for Latin squares in a system of type $(\ref{Eq 2})$ as follows:

\vspace{10mm}

\begin{algorithm}
\caption{Eigenvalue and Eigenvectors for Latin squares in a system of type $(\ref{Eq 2})$} \label{Alg 3}
\begin{enumerate}

\item Compute the eigenvalue as, $\lambda = \dfrac{max(A) + min(B)}{2}$.

\item Define $A_{\lambda} = -\lambda \otimes A$, and $B_{\lambda} = -\lambda \otimes B$.

\item Take a starting vector $x^{*}(0)$.

\item Iterate $(\ref{Eq 5})$, until for some integers $r>s\geq 0$, it holds that $x^{*}(r) = x^{*}(s)$.

\item Determine the eigenvector as,
\begin{equation*}
v = x^{*}(s) \oplus ...  \oplus x^{*}(r-1).
\end{equation*}

\item If $\mathcal{N} (v) = v$ then $v$ is the correct eigenvector corresponding to the eigenvalue $\lambda$ and algorithm can stops. Else if $\mathcal{N} (v) \neq v$ then go to the following step.

\item Take $v$ as initial state vector and iterate $x^{*}(l+1) = \mathcal{N} (x^{*}(l))$, until for some $t \geq 0$, it holds that $x^{*}(t+1) = x^{*}(t)$. Finally $x^{*}(t)$ is an eigenvector.

\end{enumerate}
\end{algorithm}

Here we find the eigenvalue and eigenvectors for Latin squares in systems of type $(\ref{Eq 2})$ by using algorithm $(\ref{Alg 3})$. First, recall the power algorithm for systems of type $(\ref{Eq 2})$, proposed by Subiono \cite{15}, then we compare it with algorithm $(\ref{Alg 3})$.

\vspace{10mm}

\begin{algorithm}
\caption{Eigenproblems for Bipartite (Min, Max, Plus)-Systems} $\label{Alg 2}$
\begin{enumerate}

\item Define a starting vector $x(0)$.

\item Iterate $(\ref{Eq 2})$, until there exist a real number $c$ and positive integers $r; s$, such that $r > s \geq 0$ with $x(r) = c \otimes x(s)$.

\item The eigenvalue is obtained as, $\lambda = \dfrac{c}{p-q}.$

\item Determine the eigenvector as, $v = \oplus^{p-q}_{j=1}(\lambda^{\otimes(p-q-j)} \otimes x(q+j-1)).$

\item If $\mathcal{M} (v) = \lambda \otimes v$ then $v$ is the required eigenvector corresponding to the eigenvalue $\lambda$ and algorithm can stop. If $\mathcal{M} (v) \neq \lambda \otimes v$, then the algorithm has to be continued as follows.

\item Define $x(0) = v$ as new starting vector and restart (\ref{Eq 2}), until for some $p$, there holds $x(p+1) = \lambda \otimes x(p)$. Then $x(p)$ is a correct eigenvector of system (\ref{Eq 2}).

\end{enumerate}
\end{algorithm}

\vspace{10mm}

To illustrate the algorithm $(\ref{Alg 3})$ and algorithm $(\ref{Alg 2})$, consider the following example. In this example, we consider a system of type $(\ref{Eq 2})$, where $A$ and $B$ are Latin Squares with entries in $\underline{n}_{\epsilon}$ and $\underline{n}_{\tau}$ respectively.

\vspace{10mm}

\begin{example} \label{Ex 1}
Let $A$ and $B$ are Latin Squares in a system of type $(\ref{Eq 2})$, given as follows:
\begin{center}
\[
 A=\begin{bmatrix}
    3 & 2 & \epsilon & 1 \\
    \epsilon & 1 & 3 & 2 \\
    2 & 3 & 1 & \epsilon \\
    1 & \epsilon & 2 & 3 \\
 \end{bmatrix},
 \text{ \ \ \ }
 B=\begin{bmatrix}
    2 & 3 & \tau & 1 \\
    3 & \tau & 1 & 2 \\
    1 & 2 & 3 & \tau \\
    \tau & 1 & 2 & 3 \\
 \end{bmatrix}.
 \]
 \end{center}

\begin{enumerate}
\item The eigenvalue $\lambda$ is given as,

\begin{equation*}
\lambda = \dfrac{max(A) + min(B)}{2} = 2.
\end{equation*}

By algorithm $(\ref{Alg 3})$,

\begin{center}
 \[
 A_{\lambda} =\begin{bmatrix}
    1 & 0 & \epsilon & -1 \\
    \epsilon & -1 & 1 & 0 \\
    0 & 1 & -1 & \epsilon \\
    -1 & \epsilon & 0 & 1 \\
 \end{bmatrix},
 \text{ \ \ \ }
 B_{\lambda} =\begin{bmatrix}
    0 & 1 & \tau & -1 \\
    1 & \tau & -1 & 0 \\
    -1 & 0 & 1 & \tau \\
    \tau & -1 & 0 & 1 \\
 \end{bmatrix}.
 \]
 \end{center}

Now, take the initial state vector
\begin{center}
\[
x^{*}(0)=\left(\begin{array}{c}
    u^{*}(0) \\
    w^{*}(0)
 \end{array}
\right)
\text{ \ with \ \ }
u^{*}(0)=\begin{pmatrix}
0\\
1\\
0\\
1 \end{pmatrix},
\text{ \ \ \ }
w^{*}(0)=\begin{pmatrix}
1\\
0\\
1\\
0 \end{pmatrix}.
\]
\end{center}

The following sequence is obtained, after iterating $(\ref{Eq 5})$,
\begin{eqnarray*}
x^{*}(0)\text{ \ }  \rightarrow \text{ \ \ } x^{*}(1)\text{ \ }  \rightarrow \text{ \ \ } x^{*}(2)\text{ \ } &\rightarrow\text{ \ } x^{*}(3)\text{ \ }  \rightarrow \text{ \ \ } x^{*}(4)\text{ \ }  \rightarrow \text{ \ \ } x^{*}(5)\text{ \ } &\rightarrow\text{ \ } x^{*}(6) \\
 \left(\begin{array}{c}
    0 \\
    1 \\
    0 \\
    1 \\
    1 \\
    0 \\
    1 \\
    0
  \end{array}
\right) \rightarrow \left(\begin{array}{c}
    2 \\
    2 \\
    1 \\
    1 \\
    0 \\
    -1 \\
    -1 \\
    0
  \end{array}
\right) \rightarrow \left(\begin{array}{c}
    1 \\
    0 \\
    0 \\
    1 \\
    0 \\
    0 \\
    1 \\
    1
  \end{array}
\right) &\rightarrow  \left(\begin{array}{c}
    1 \\
    2 \\
    1 \\
    2 \\
    0 \\
    -1 \\
    0 \\
    -1
  \end{array}
\right) \rightarrow \left(\begin{array}{c}
    1 \\
    1 \\
    0 \\
    0 \\
    1 \\
    0 \\
    0 \\
    1
  \end{array}
\right) \rightarrow \left(\begin{array}{c}
    2 \\
    1 \\
    1 \\
    2 \\
    -1 \\
    -1 \\
    0 \\
    0
  \end{array}
\right) &\rightarrow  \left(\begin{array}{c}
    0 \\
    1 \\
    0 \\
    1 \\
    1 \\
    0 \\
    1 \\
    0
  \end{array}
\right).
\end{eqnarray*}

Since $x^{*}(6) = x^{*}(0)$, therefore $s = 0, r = 6$. The required eigenvector $v$ is computed as,
\begin{eqnarray*}
v &=& \text{ \ }x^{*}(s) \oplus . . . \oplus x^{*}(r-1) \\
 &=& \text{ \ }x^{*}(0) \oplus . . . \oplus x^{*}(5) \\
 &=&
 \left(\begin{array}{c}
    0 \\
    1 \\
    0 \\
    1 \\
    1 \\
    0 \\
    1 \\
    0
  \end{array}
\right) \oplus \left(\begin{array}{c}
    2 \\
    2 \\
    1 \\
    1 \\
    0 \\
    -1 \\
    -1 \\
    0
  \end{array}
\right) \oplus \left(\begin{array}{c}
    1 \\
    0 \\
    0 \\
    1 \\
    0 \\
    0 \\
    1 \\
    1
  \end{array}
\right) \oplus  \left(\begin{array}{c}
    1 \\
    2 \\
    1 \\
    2 \\
    0 \\
    -1 \\
    0 \\
    -1
  \end{array}
\right) \oplus \left(\begin{array}{c}
    1 \\
    1 \\
    0 \\
    0 \\
    1 \\
    0 \\
    0 \\
    1
  \end{array}
\right) \oplus \left(\begin{array}{c}
    2 \\
    1 \\
    1 \\
    2 \\
    -1 \\
    -1 \\
    0 \\
    0
  \end{array}
\right)
 = \left(\begin{array}{c}
    2 \\
    2 \\
    1 \\
    2 \\
    1 \\
    0 \\
    1 \\
    1
  \end{array}
\right).
\end{eqnarray*}

Now verify that either $v$ is the correct eigenvector or not. So
\begin{equation*}
\mathcal{M}(v) = \left(\begin{array}{c}
                   4 \\
                   4 \\
                   3 \\
                   4 \\
                   3 \\
                   2 \\
                   3 \\
                   3
                 \end{array}
\right) = \lambda \otimes v.
\end{equation*}
Which shows that the eigenvector $v$ is the correct eigenvector obtained by algorithm $(\ref{Alg 3})$.

\item For algorithm $(\ref{Alg 2})$, take the initial vector

\begin{center}
\[
x(0)=\left(\begin{array}{c}
    u(0) \\
    w(0)
 \end{array}
\right)
\text{ \ with \ \ }
u(0)=\begin{pmatrix}
0\\
1\\
0\\
1 \end{pmatrix},
\text{ \ \ \ }
w(0)=\begin{pmatrix}
1\\
0\\
1\\
0 \end{pmatrix}.
\]
\end{center}

The following sequence is obtained, after iterating $(\ref{Eq 2})$
\begin{eqnarray*}
x(0)\text{ \ }  \rightarrow \text{ \ \ } x(1)\text{ \ }  \rightarrow \text{ \ \ } x(2)\text{ \ } &\rightarrow\text{ \ } x(3)\text{ \ }  \rightarrow \text{ \ \ } x(4)\text{ \ }  \rightarrow \text{ \ \ } x(5)\text{ \ } &\rightarrow\text{ \ } x(6) \\
 \left(\begin{array}{c}
    0 \\
    1 \\
    0 \\
    1 \\
    1 \\
    0 \\
    1 \\
    0
  \end{array}
\right) \rightarrow \left(\begin{array}{c}
    4 \\
    4 \\
    3 \\
    3 \\
    2 \\
    1 \\
    1 \\
    2
  \end{array}
\right) \rightarrow \left(\begin{array}{c}
    5 \\
    4 \\
    4 \\
    5 \\
    4 \\
    4 \\
    5 \\
    5
  \end{array}
\right) &\rightarrow  \left(\begin{array}{c}
    7 \\
    8 \\
    7 \\
    8 \\
    6 \\
    5 \\
    6 \\
    5
  \end{array}
\right) \rightarrow \left(\begin{array}{c}
    9 \\
    9 \\
    8 \\
    8 \\
    9 \\
    8 \\
    8 \\
    9
  \end{array}
\right) \rightarrow \left(\begin{array}{c}
    12 \\
    11 \\
    11 \\
    12 \\
    9 \\
    9 \\
    10 \\
    10
  \end{array}
\right) &\rightarrow  \left(\begin{array}{c}
    12 \\
    13 \\
    12 \\
    13 \\
    13 \\
    12 \\
    13 \\
    12
  \end{array}
\right).
\end{eqnarray*}
Since $x(6) = 12 \otimes x(0)$. It follows that $s = 0, r = 6$, and $c = 12$. The corresponding eigenvector $v$ is obtained as
\begin{eqnarray*}
v &=& \oplus^{r-s}_{j=1}(\lambda^{\otimes(r-s-j)} \otimes x(s+j-1)) \\
 &=& \oplus^{6}_{j=1}(\lambda^{\otimes(6-j)} \otimes x(j-1)) \\
 &=& \lambda^{\otimes(5)} \otimes x(0) \oplus \lambda^{\otimes(4)} \otimes x(1) \oplus \lambda^{\otimes(3)} \otimes x(2) \oplus \lambda^{\otimes(2)} \otimes x(3) \oplus \lambda \otimes x(4) \oplus x(5) \\
 &=& \left(\begin{array}{c}
    10 \\
    11 \\
    10 \\
    11 \\
    11 \\
    10 \\
    11 \\
    10
  \end{array}
\right) \oplus \left(\begin{array}{c}
    12 \\
    12 \\
    11 \\
    11 \\
    10 \\
    9 \\
    9 \\
    10
  \end{array}
\right) \oplus \left(\begin{array}{c}
    11 \\
    10 \\
    10 \\
    11 \\
    10 \\
    10 \\
    11 \\
    11
  \end{array}
\right) \oplus  \left(\begin{array}{c}
    11 \\
    12 \\
    11 \\
    12 \\
    10 \\
    9 \\
    10 \\
    9
  \end{array}
\right) \oplus \left(\begin{array}{c}
    11 \\
    11 \\
    10 \\
    10 \\
    11 \\
    10 \\
    10 \\
    11
  \end{array}
\right) \oplus \left(\begin{array}{c}
    12 \\
    11 \\
    11 \\
    12 \\
    9 \\
    9 \\
    10 \\
    10
  \end{array}
\right) \\
&=&  \left(\begin{array}{c}
    12 \\
    12 \\
    11 \\
    12 \\
    11 \\
    10 \\
    11 \\
    11
    \end{array}
\right).
\end{eqnarray*}
Which is the correct eigenvector.
\end{enumerate}
\end{example}

\begin{remark}
The main focus in this paper is to develop an efficient algorithm to find out the eigenvectors for systems of type $(\ref{Eq 2})$. Also, we have made numerical experiment to compare the algorithm $(\ref{Alg 3})$ and algorithm $(\ref{Alg 2})$.

It is clear by Example $(\ref{Ex 1})$, that for Latin squares in a system of type $(\ref{Eq 2})$, the computation of eigenvectors using algorithm $(\ref{Alg 3})$ is quit easy as compared to that of algorithm $(\ref{Alg 2})$. In case of algorithm $(\ref{Alg 2})$, it stops if there exist positive integers $r>s\geq 0$ and a real number $c$ with $x(r) = x(s) \otimes c$, while in that of algorithm $(\ref{Alg 3})$, it stops if for some integers $r>s\geq 0$ it holds $x^{*}(r) = x^{*}(s)$. Also in case of algorithm $(\ref{Alg 3})$, we obtain the eigenvector $v$ by a simple formula, given as $v = x^{*}(s) \oplus ...  \oplus x^{*}(r-1)$ however, while using that algorithm $(\ref{Alg 2})$ an eigenvector is obtained as
\begin{equation*}
v = \oplus_{j=1}^{r-s}(\lambda^{\otimes (r-s-j)} \otimes x(s+j-1)).
\end{equation*}
Which is quite difficult as compared to algorithm $(\ref{Alg 3})$.
\end{remark}

\section{CONCLUSION}
The eigenproblem of bipartite min-max-plus systems for Latin squares has been discussed in this work. An iterative algorithm was developed for the computation of the eigenvectors (trivial and nontrivial) for systems of type $(\ref{Eq 2})$. In particular, the computation of the eigenvalue and eigenvectors for Latin squares in a system of type $(2)$ has been made by the proposed algorithm. In the end, a computational comparison has been given. Similarly, one can derive an algorithm to calculate the eigenvalue and eigenvectors for separated min-max-plus systems.

\vspace{10mm}


\begin{thebibliography}{99}
\bibitem{7a}
Akian, M.; Gaubert, S.; Nitica, V.; Singer, I. Best approximation in maxplus semimodules.
{\em Linear Algebra Its Appl.} {\bf 2011}, {\em 435}, 3261--3296.
\bibitem{5a}
Braker, J.G.; Olsder, G.J. The power algorithm in max algebra. {\em Linear Algebra Its Appl.} {\bf 1993}, {\em 182}, 67--89.
\bibitem{11}
Cuninghame-Green, R.A. Lecture notes in economics and mathematical systems. In {\em Minimax Algebra};
Springer-Verlag: New York, NY, USA, {\bf 1979}.
\bibitem{2}
De Shutter, B. On the ultimate behavior of the sequence of consecutive powers of a matrix in the max-plus algebra.
{\em Linear Algebra Its Appl.} {\bf 2000}, {\em 307}, 103--117.
\bibitem{8a}
Garca-Planas, M.I.; Magret, M.D. Eigenvectors of permutation matrices. {\em Adv. Pure Math.} {\bf 2015}, {\em 5}, 390--394.
\bibitem{16}
Gaubert, S. Methods and applications of (max,+) linear algebra. In {\em Annual Symposium on Theoretical Aspects of Computer Science}; Springer-Verlag: Berlin/Heidelberg, Germany, {\bf 1997}, 261--282.
\bibitem{12}
Halburd, R.G.; Southall, N.J. Tropical nevanlinna theory and ultra-discrete equations. {\em Int. Math. Res. Not. } {\bf 2009}, {\em5}, 887--911.
\bibitem{2a}
McKay, B.D.; Wanless, I.M. On the number of Latin squares.
{\em Ann. Comb.} {\bf 2005}, {\em 9}, 334--344.
\bibitem{7b}
Santoso, K.A.; Suprajitno, H. On Max-Plus Algebra and Its Application on Image Steganography.
{\em Sci. World J.} {\bf 2018},  6718653.
\bibitem{14}
Subiono on Classes of Min-Max-Plus Systems and Their Application. Ph.D. Thesis, Delft University of Technology,  Delft , The Netherlands, {\bf 2000}.
\bibitem{9}
Subiono; Mufid, M. S.; Adzkiya, D. Eigenproblems of latin squares in bipartite (min, max, plus)-systems.
{\em Discret. Event Dyn. Syst.\/} {\bf 2016}, {\em 26}, 657--668.
\bibitem{15}
Subiono; van der Woude, J. Power algorithms for (max,+)- and bipartite (min,max,+)-systems.  {\em Discret. Event Dyn. Syst.} {\bf 2000}, {\em 10}, 369--389.
\bibitem{17}
Umer, M.; Hayat, U.; Abbas, F. An Efficient Algorithm for Nontrivial  Eigenvectors in Max-Plus Algebra.
{\em  Symmetry,\/} {\bf 2019}, {\em 11}, 1--9.


\end{thebibliography}
\end{document}